\newtheorem{theorem}{Theorem}[section]
\newtheorem{corollary}{Corollary}
\newtheorem{lemma}[theorem]{Lemma}
\theoremstyle{definition}
\numberwithin{equation}{section}
\begin{document}

\title[ A simple proof of a theorem of sensitivity ]{ A simple proof of a theorem of sensitivity}

\author[Jorge Iglesias and Aldo Portela]{}

% It is required to enter 2010 MSC.
%\subjclass{Primary: 37B05; Secondary: 37B10, 54H20, 58F03.}
% Please provide minimum  5 keywords.
 %\keywords{semigroups actions, transitivity, sensitivity.}

%\address{N. Guelman, Universidad de La Rep\'ublica. Facultad de Ingenieria. IMERL. Julio
%Herrera y Reissig 565. C.P. 11300. Montevideo, Uruguay }
%\email{nguelman@fing.edu.uy}
%
\address{J. Iglesias, Universidad de La Rep\'ublica. Facultad de Ingenieria. IMERL. Julio
Herrera y Reissig 565. C.P. 11300. Montevideo, Uruguay}
\email{jorgei@fing.edu.uy }

\address{A. Portela, Universidad de La Rep\'ublica. Facultad de Ingenieria. IMERL. Julio
Herrera y Reissig 565. C.P. 11300. Montevideo, Uruguay }
\email{aldo@fing.edu.uy }

% It is required to enter 2010 MSC.
\subjclass{Primary: 37B05; Secondary: 20M30  .}
% Please provide minimum  5 keywords.
 \keywords{Semigroups actions,Transitivity, sensitivity.}
\maketitle

\centerline{\scshape  Jorge Iglesias$^*$ and Aldo Portela$^*$}
\medskip
{\footnotesize
 \centerline{Universidad de La Rep\'ublica. Facultad de Ingenieria. IMERL}
   \centerline{ Julio Herrera y Reissig 565. C.P. 11300}
   \centerline{ Montevideo, Uruguay}}

\bigskip

 \centerline{(Communicated by )}

\begin{abstract} We prove that every transitive and non minimal semigroup with dense minimal points is sensitive. When the system is almost open, we obtain a generalization of this result.

\end{abstract}

\section{Introduction.}

% Without loss of generality it is possible to associate each element of $\mathcal{S}$ to a continuous function $\psi :X \to X$.
% When $\mathcal{S}=\{f^{n}\}$, with $f:X\to X$, the dynamical system is called cascade, and we use the standard notation: $(X, f)$.\\

 The
dynamical system that we will consider is formally defined as a triplet $(\mathcal{S},X,\Phi)$ where $\mathcal{S}$ is a topological
semigroup and $\Phi:\mathcal{S} \times X \to X$ is a continuous function with $\Phi (  s_1, \Phi(s_2 ,x))= \Phi(s_1 s_2 ,x)$ for all $s_1,s_2 \in \mathcal{S}$ and for all $x \in X$ and $X$ is a metric space. The map $\Phi$ is called an action of $\mathcal{S}$ on $X$. We will denote $\Phi (s,x)$ by $\Phi_s (x)$. Devaney defines a function to be chaotic if it satisfies the following three conditions: transitivity, having dense set of
periodic points and sensitive dependence on initial conditions.
 In \cite{3} it was proved that the two first conditions imply the last one. This result was generalized in \cite{aak}, by changing density of periodic points by density of
 minimal points. In \cite{g} this result was generalized when $\mathcal{S}$ is a group, in \cite{km} was generalized to $C$-semi groups and in \cite{d} for a continuous semi-flow and $X$ being a Polish space. The goal of this paper is to generalize the result in \cite{km}, giving a very simple proof. In \cite{grss} it is possible to find other results about sensitivity.

 \subsection{Basic definitions}

  Let $( \mathcal{S}, X, \Phi )$ be a dynamical system. For any $x \in X$ we
define the orbit of $x$ as  $O(x)=\{\Phi_s(x): \ s\in \mathcal{S}\} $. A non-empty set $Y \subset X$,  is minimal if $\overline{O(y)}=Y$ for any $y \in Y$.
A point $x \in X$ is minimal if the set $\overline{O(x)}$ is a compact and a minimal set. We denote by $\mathcal{M}$ the set of minimal points. We say that the dynamical system  $( \mathcal{S}, X, \Phi )$
is minimal if there exists a minimal point $x\in X$ such that $\overline{O(x)}=X$.
%The system  $(X,\mathcal{S})$ is point transitive
%if there exists $x \in X$ such that $\overline{O(x)}=X$ and is topologically transitive if given two open sets
%$U,V \subset X$  there exists $\psi \in \mathcal{S}$ such that $\psi(U) \cap V \neq \emptyset$.

 The dynamical system  $( \mathcal{S}, X, \Phi )$ is point transitive (PT)
if there exists $x \in X$ such that $\overline{O(x)}=X$, it is  topologically transitive (TT) if given two opene (= open and non-empty) sets
$U,V \subset X$  there exists $s \in \mathcal{S}$ such that $\Phi_s (U) \cap V \neq \emptyset$ and it is densely point transitive (DPT) if
 there exists  a dense set $Y\subset X$ of transitive points. Denote by $Trans(X)$ the set of transitive points.
If $X$ is a Polish space  (i.e. separable completely metrizable topological space) then TT implies DPT.\\
We say that a dynamical system $( \mathcal{S}, X, \Phi )$  is sensitive if there exists $\varepsilon >0$ such that for all $x\in X$ and for all $\delta >0$ there exists $y\in B(x,\delta )$ and
$s \in \mathcal{S}$ such that $d(\Phi_s (x), \Phi_s (y))>\varepsilon $.\\ We will now state our main result:

\begin{theorem}\label{main}
 Assume that the dynamical system $( \mathcal{S}, X, \Phi )$  is TT, non-minimal and $\mathcal{M}$ is a dense set. Then $( \mathcal{S}, X, \Phi )$  is sensitive.
\end{theorem}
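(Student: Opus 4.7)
The plan is to adapt the classical Banks--Brooks--Cairns--Davis--Stacey strategy for Devaney chaos to the semigroup setting, replacing periodicity by the uniform recurrence of minimal points in a compact minimal set. I first use non-minimality together with density of $\mathcal{M}$ to produce two disjoint compact minimal sets. If there were a unique compact minimal set $M_0$, then density of $\mathcal{M}$ would force $M_0$ to be a dense closed subset of $X$, hence $M_0 = X$, contradicting non-minimality. So there exist distinct compact minimal sets $M_1, M_2$, which are automatically disjoint; thus $d(M_1, M_2) > 0$, and I fix $\varepsilon > 0$ with $8\varepsilon < d(M_1, M_2)$.

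Now fix arbitrary $x \in X$ and $\delta > 0$; I may assume $\delta < \varepsilon/4$. Up to relabeling, $d(x, M_1) \geq 4\varepsilon$. By DPT I pick a transitive point $y \in B(x, \delta)$, and by density of $\mathcal{M}$ a minimal point $q \in B(x, \delta)$; write $M = \overline{O(q)}$. For the opene set $W = B(q, \varepsilon/4)$ and each $z \in M$, minimality of $M$ yields $t_z \in \mathcal{S}$ with $\Phi_{t_z}(z) \in W$, so $z \in \Phi_{t_z}^{-1}(W)$, an open subset of $X$. Compactness of $M$ then produces finitely many $t_1, \ldots, t_n \in \mathcal{S}$ with $M \subset \bigcup_i \Phi_{t_i}^{-1}(W)$. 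Each $\Phi_s$ leaves the compact minimal set $M_1$ invariant (a standard consequence of minimality and continuity), so each $\Phi_{t_i}^{-1}(B(M_1, \varepsilon/4))$ is open and contains $M_1$; compactness of $M_1$ then yields a single $\eta > 0$ with $B(M_1, \eta) \subset \Phi_{t_i}^{-1}(B(M_1, \varepsilon/4))$ simultaneously for all $i$.

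Density of $O(y)$ next gives $s \in \mathcal{S}$ with $\Phi_s(y) \in B(M_1, \eta)$. Since $\Phi_s(q) \in M$, some $t_i$ satisfies $\Phi_{t_i s}(q) \in W$, and at the same time $\Phi_{t_i s}(y) = \Phi_{t_i}(\Phi_s(y)) \in B(M_1, \varepsilon/4)$. Writing $s' = t_i s$, the triangle inequality gives $d(\Phi_{s'}(q), x) \leq d(\Phi_{s'}(q), q) + d(q, x) < \varepsilon/2$, hence $d(\Phi_{s'}(q), M_1) > 7\varepsilon/2$, and finally
$$d(\Phi_{s'}(y), \Phi_{s'}(q)) \;\geq\; d(\Phi_{s'}(q), M_1) - d(\Phi_{s'}(y), M_1) \;>\; 3\varepsilon.$$
Since $d(\Phi_{s'}(y), \Phi_{s'}(x)) + d(\Phi_{s'}(x), \Phi_{s'}(q)) > 3\varepsilon$, at least one of $y, q$ (both in $B(x, \delta)$) is moved by $\Phi_{s'}$ to distance $> 3\varepsilon/2$ from $\Phi_{s'}(x)$, witnessing sensitivity with constant $\varepsilon$.

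The main obstacle, I expect, is the uniform recurrence step: absent any notion of period in a general topological semigroup, I need to extract a finite family $\{\Phi_{t_1}, \ldots, \Phi_{t_n}\}$ along which $M$ is guaranteed to return to $W$ and simultaneously obtain joint equicontinuity of this family around $M_1$. Both ingredients come purely from compactness (of $M$ and of $M_1$) and continuity of the individual maps $\Phi_{t_i}$, which is precisely what lets the argument go through without any algebraic structure on $\mathcal{S}$.
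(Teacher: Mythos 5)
Your proof is correct and follows essentially the same route as the paper: extract two disjoint compact minimal sets from non-minimality and density of $\mathcal{M}$, use compactness of the orbit closure of a nearby minimal point to get a finite family of maps forcing a return to a small ball (this is exactly the paper's Lemma \ref{punto_minimal}), choose a uniform neighbourhood of $M_1$ whose images under that finite family stay close to $M_1$, send a nearby transitive point into it, and conclude by triangle inequalities. The only differences are cosmetic: you spell out the compactness/invariance argument producing the uniform $\eta$-neighbourhood of $M_1$ and the final triangle-inequality step comparing both test points with $\Phi_{s'}(x)$, details the paper leaves implicit.
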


We say that the map $\Phi_s$, $s\in \mathcal{S}$, is  almost open if $int (\Phi_s (U))$ ($int(W)$ denotes interior of W) is non-empty whenever $U$ is opene.
The dynamical system $( \mathcal{S}, X, \Phi )$ is almost open if $\Phi_s$ is  almost open for any $s\in \mathcal{S}$.
The almost open dynamical systems include dynamical systems whose elements are open functions or homeomorphisms. When $ X $ is a manifold and the elements of the system are $ C ^{1} $ functions, the set of critical points can be with non-empty interior. But in this case, an open set can not be mapped at a point.

When the system is almost open we obtain a generalization of the Theorem \ref{main}. %It is also a slight generalization of {\cite[Theorem 3.11]{grss}} when $X$ is a Polish space.

 Denote by $\mathcal{M}^{-1}=\cup_{s \in \mathcal{S}}\Phi_s^{-1}(\mathcal{M})$.

\begin{theorem}\label{teo22}
 Assume that the dynamical system $( \mathcal{S}, X, \Phi )$  is almost open, TT and non-minimal. If  $\mathcal{M}^{-1}$ is dense then $( \mathcal{S}, X, \Phi )$ is sensitive.
\end{theorem}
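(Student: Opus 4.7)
The plan is to split on whether the set $\mathcal{M}$ of minimal points is itself dense in $X$. If it is, then the three hypotheses of Theorem~\ref{main} (DPT, non-minimal, $\mathcal{M}$ dense) all hold and sensitivity follows at once; so assume from now on that $\mathcal{M}$ is not dense. Then $X\setminus\overline{\mathcal{M}}$ is opene, and I would fix any $u_0$ in it, set $r=d(u_0,\overline{\mathcal{M}})>0$, and take $\varepsilon=r/4$ as the candidate sensitivity constant. The decisive feature is that every compact minimal set $M$ is contained in $\overline{\mathcal{M}}$, so $d(u_0,M)\ge r$ uniformly in $M$, which is what will make $\varepsilon$ work for all $x,\delta$ simultaneously.

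Given $x\in X$ and $\delta>0$, put $U=B(x,\delta)$. The aim is to exhibit two points $y_1,y_2\in U$ and a single $s\in\mathcal{S}$ for which $\Phi_s(y_1)$ and $\Phi_s(y_2)$ are far apart, and then conclude by the triangle inequality. Using density of $\mathcal{M}^{-1}$, I would first pick $y_1\in U$ and $s_1\in\mathcal{S}$ with $p_1:=\Phi_{s_1}(y_1)\in\mathcal{M}$, and set $M_1=\overline{O(p_1)}$. Almost openness of $\Phi_{s_1}$ then gives that $V:=\mathrm{int}(\Phi_{s_1}(U))$ is opene, and DPT supplies a transitive point $q\in V$, necessarily of the form $q=\Phi_{s_1}(y_2)$ with $y_2\in U$. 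Density of $O(q)$ produces $t\in\mathcal{S}$ with $\Phi_t(q)\in B(u_0,\varepsilon)$; since $M_1\subseteq\overline{\mathcal{M}}$ one then has $d(\Phi_t(q),M_1)\ge r-\varepsilon=3\varepsilon$. Setting $s=ts_1$ gives $\Phi_s(y_1)=\Phi_t(p_1)\in M_1$ and $\Phi_s(y_2)=\Phi_t(q)$, hence $d(\Phi_s(y_1),\Phi_s(y_2))\ge 3\varepsilon$, and the triangle inequality on the triple $\Phi_s(x),\Phi_s(y_1),\Phi_s(y_2)$ forces one of $d(\Phi_s(x),\Phi_s(y_i))$ to exceed $3\varepsilon/2>\varepsilon$.

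The step I expect to be the main obstacle is securing the uniform sensitivity constant. A naive strategy of fixing a single proper minimal set $M_0$ at the outset (using one $p_0\in\mathcal{M}$, which exists because $\mathcal{M}^{-1}$ is nonempty) does not work, since the minimal set $M_1$ produced by $\mathcal{M}^{-1}$-density inside a given $U$ need not equal $M_0$, and distinct minimal sets can be arbitrarily close to each other. The dichotomy sidesteps this: in the non-dense case the single point $u_0\in X\setminus\overline{\mathcal{M}}$ is bounded away from \emph{every} compact minimal set simultaneously, which is precisely the uniformity the triangle-inequality step needs; in the dense case the already-proven Theorem~\ref{main} finishes the job.
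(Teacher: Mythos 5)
Your proposal is correct, but it takes a genuinely different route from the paper. The paper splits according to the structure of $\mathcal{M}$ itself: case (a), $\mathcal{M}$ is a single minimal set $M$, handled by a direct argument with a reference point $x_0\in X\setminus M$ (essentially your non-dense argument specialized to one minimal set); case (b), there exist two distinct minimal sets $M_1\neq M_2$, handled by redoing the machinery of Theorem \ref{main} --- the constant $8\delta=d(M_1,M_2)$, Lemma \ref{punto_minimal} to get finitely many maps $\Phi_{s_1},\dots,\Phi_{s_{n_0}}$ returning $\overline{O(\Phi_s(y))}$ into $B(\Phi_s(y),\delta)$, and a neighbourhood $W$ of $M_1$ uniformly separated under these maps --- with almost openness used to push a transitive point through $\Phi_s(W_y)$. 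You instead split on whether $\mathcal{M}$ is dense: in the dense case you simply invoke Theorem \ref{main} (no almost openness or $\mathcal{M}^{-1}$ needed there), and in the non-dense case the single point $u_0\in X\setminus\overline{\mathcal{M}}$ is at distance $\geq r$ from \emph{every} compact minimal set, since any such set consists of minimal points and hence lies in $\overline{\mathcal{M}}$; this uniform gap is exactly what lets one reference point serve all the (possibly many, possibly mutually close) minimal sets produced by density of $\mathcal{M}^{-1}$, and it eliminates Lemma \ref{punto_minimal} from this half of the proof entirely. The bookkeeping in your non-dense case is sound: $\Phi_{ts_1}(y_1)=\Phi_t(p_1)\in M_1=\overline{O(p_1)}$ by invariance of the orbit closure, $\Phi_{ts_1}(y_2)=\Phi_t(q)\in B(u_0,\varepsilon)$ is at distance $\geq 3\varepsilon$ from $M_1$, and the triangle inequality yields a point of $B(x,\delta)$ separated from $x$ by more than $\varepsilon$ under one element of $\mathcal{S}$, matching the paper's definition of sensitivity. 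What each approach buys: yours is shorter and reuses Theorem \ref{main} as a black box, at the cost of being less self-contained; the paper's case analysis keeps both halves parallel to the proof of Theorem \ref{main} and never needs the auxiliary set $\overline{\mathcal{M}}$, but pays with the covering lemma and the $n_0$ maps in case (b). Your remark about why fixing a single minimal set $M_0$ in advance fails is accurate and is precisely the difficulty the paper's case (b) (and your dichotomy) is designed to circumvent.
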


In section \ref{ejemplos} we construct an example that shows that the above result no longer holds without the hypothesis that $( \mathcal{S}, X, \Phi )$ is almost open.

\section{Proof of Theorems \ref{main} and \ref{teo22}.}

For the proof of the main theorems we need the following lemma:

\begin{lemma}\label{punto_minimal}
   Let $x$ be a minimal point and $U_x$ a neighbourhood of $x$. Then there exist $s_1,...,s_{n_{0}}\in \mathcal{S}$  such that for all  $z\in\overline{O(x)}$  there exists  $i\in \{1,...,n_{0}\}$  with  $\Phi_{ s_{i}}(z)\in U_x$.
\end{lemma}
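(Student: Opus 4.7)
The plan is to combine minimality of $\overline{O(x)}$ with a standard compactness argument applied to a cover built via the continuity of each $\Phi_s$.

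First I would unpack what minimality gives us pointwise. Because $x$ is a minimal point, $\overline{O(x)}$ is a compact minimal set, so for every $z\in\overline{O(x)}$ we have $\overline{O(z)}=\overline{O(x)}\ni x$. In particular, $x$ lies in the closure of the orbit of $z$, so there exists some $s_z\in\mathcal{S}$ with $\Phi_{s_z}(z)\in U_x$.

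Next I would promote this pointwise statement to a neighbourhood statement. Since $\Phi$ is jointly continuous, the map $\Phi_{s_z}\colon X\to X$ is continuous, hence $\Phi_{s_z}^{-1}(U_x)$ is open and contains $z$. Call this open neighbourhood $V_z$, so that $\Phi_{s_z}(V_z)\subset U_x$. The family $\{V_z\cap\overline{O(x)}:z\in\overline{O(x)}\}$ is an open cover of the compact set $\overline{O(x)}$.

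Finally, extract a finite subcover $V_{z_1},\ldots,V_{z_{n_0}}$, and let $s_i:=s_{z_i}$ for $i=1,\ldots,n_0$. Any $z\in\overline{O(x)}$ lies in some $V_{z_i}$, so $\Phi_{s_i}(z)\in U_x$, which is exactly the conclusion sought. There is no real obstacle here; the only point that needs care is verifying that minimality yields $x\in\overline{O(z)}$ for every $z\in\overline{O(x)}$, and that $\overline{O(x)}$ is compact (both packaged into the definition of minimal point given in the paper). The argument uses no hypothesis beyond minimality of $x$ and continuity of the action.
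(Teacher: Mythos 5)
Your proposal is correct and follows essentially the same route as the paper: minimality gives, for each $z\in\overline{O(x)}$, some $s_z$ with $\Phi_{s_z}(z)\in U_x$, continuity upgrades this to a neighbourhood $V_z$ with $\Phi_{s_z}(V_z)\subset U_x$, and compactness of $\overline{O(x)}$ yields the finite subcover defining $s_1,\dots,s_{n_0}$. The only difference is that you spell out explicitly why $x\in\overline{O(z)}$, which the paper leaves implicit.
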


\begin{proof}
  As $x$ is a minimal point, given  $z\in\overline{O(x)}$  there exist $s_z\in \mathcal{S}$ such that $\Phi_{s_{z}} (z)\in U_x$. By continuity of $\Phi_{s_{z}} $ there exists a
  neighbourhood $V_z$ of $z$ such that $\Phi_{s_{z}}  (V_z)\subset U_x$. As $\overline{O(x)}\subset \cup_{z\in \overline{O(x)}}V_z$, by the compactness of $\overline{O(x)}$, there exist  $z_1,...,z_{n_{0}}\in \overline{O(x)}$ such
  that  $\overline{O(x)}\subset V_{z_{1}}\cup\cdots \cup V_{z_{n_{0}}}$. Then, the functions  $\Phi_{s_{z_{1}}}  ,..., \Phi_{s_{z_{n_{0}} }} $  satisfy the lemma.
\end{proof}

{\bf{Proof of Theorem \ref{main}:}}\\
As the dynamical system $( \mathcal{S}, X, \Phi )$ is non-minimal and $\mathcal{M}$ is dense there exists $M_1$ and $M_2$ minimal sets with $M_1\cap M_2=\emptyset$. Let $8\delta =d(M_1,M_2)$. We will show that the dynamical system $( \mathcal{S}, X, \Phi )$ has sensitive dependence on initial conditions with sensitivity constant $\delta $.
Let $x\in X$ and $U_x$ a neighbourhood of $x$ with $U_x\subset B(x,2\delta)$. Note that $d(x,M_1)\geq 4\delta $ or $d(x,M_2)\geq 4\delta $. Suppose that $d(x,M_1)\geq 4\delta $. As $\mathcal{M}$ is dense there exists $y\in \mathcal{M}\cap U_x$ with $B(y, \delta )\subset B(x,2\delta )$. By Lemma \ref{punto_minimal} there exist
$\Phi_{s_{1}},..., \Phi_{s_{n_{0} }}$ such that for all  $z\in\overline{O(y)}$  there exists $i\in \{1,...,n_{0}\}$  with $\Phi_{s_{_{i}}} (z)\in B(y,\delta)$.\\

Let $W$ be a neighbourhood of $M_1$ such that $$   d( \Phi_{s_{_{i}}} (W),B(x, 2\delta )) \geq \delta \mbox{  for any  } i\in \{1,...,n_{0}\}  .$$
As the dynamical system $( \mathcal{S}, X, \Phi )$ is TT then there exists  $w\in  U_x$ and $\Phi_s$ such that $\Phi_s (w)\in W$. Let $i\in \{1,...,n_{0}\}$ such that $\Phi_{s_{_{i}}}\Phi_s (y)\in B(y,\delta)\subset B(x,2\delta )$. As $\Phi_{s_{_{i}}}\Phi_s (w)\in \Phi_{s_{i}}(W)$ and $  d( \Phi_{s_{_{i}}} (W),B(x, 2\delta )) \geq \delta$, then $d (   \Phi_{s_{_{i}}}\Phi_s (y)      ,  \Phi_{s_{_{i}}}\Phi_s (w)      )\geq \delta $. $\Box$

\vspace{.5cm}

 When $X$ is a Polish space and TT, Theorem \ref{main} generalizes the main result given in \cite{km}.\\

The Lemma \ref{transitive} shows that PT and $\mathcal{M}$ dense implies TT, so we obtain the following corollary:

\begin{corollary}
 Assume that the dynamical system $( \mathcal{S}, X, \Phi )$ is PT and non-minimal. If $\mathcal{M}$ is dense then $( \mathcal{S}, X, \Phi )$ is sensitive.
\end{corollary}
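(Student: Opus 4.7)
The plan is to reduce this corollary directly to Theorem \ref{main} by upgrading the hypothesis PT to DPT, using the Polish assumption on $X$ together with the auxiliary Lemma \ref{transitive}.

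First I would use Lemma \ref{transitive} (as announced just before the corollary): since the system is PT and $\mathcal{M}$ is dense, the system is TT. This step is the logical bridge, because Theorem \ref{main} is phrased in terms of DPT rather than PT, and PT alone does not in general promote to DPT.

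Next I would invoke the fact stated in the "Basic definitions" subsection: when $X$ is a Polish space, TT implies DPT. Concretely, one writes $Trans(X)=\bigcap_n \bigcup_{s\in\mathcal{S}} \Phi_s^{-1}(U_n)$ for a countable base $\{U_n\}$ of opene sets, each term in the intersection is open and, by TT, dense, so Baire's theorem yields density of $Trans(X)$. Combined with the previous step, the system is DPT.

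At this point the three hypotheses of Theorem \ref{main} — DPT, non-minimal, $\mathcal{M}$ dense — are all satisfied, so sensitivity follows immediately. The only real content is the chain PT $+$ $\mathcal{M}$ dense $\Rightarrow$ TT $\Rightarrow$ DPT; the main obstacle, in turn, is Lemma \ref{transitive}, whose verification is the single nontrivial ingredient (needing to produce, for any two opene $U,V$, an $s$ with $\Phi_s(U)\cap V\neq\emptyset$ by starting from a minimal point in $U$ and using compactness of its orbit closure against $V$). Once Lemma \ref{transitive} is available, the corollary is a one-line consequence of Theorem \ref{main}.
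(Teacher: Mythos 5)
Your proposal is correct and follows exactly the paper's route: Lemma \ref{transitive} upgrades PT plus density of $\mathcal{M}$ to TT, the Polish hypothesis (via the Baire-category description of $Trans(X)$, which the paper only states) upgrades TT to DPT, and then Theorem \ref{main} applies directly. The only difference is that you spell out the standard TT $\Rightarrow$ DPT argument, which the paper takes for granted in its Basic definitions subsection.
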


\begin{lemma}\label{transitive}
 Assume that $( \mathcal{S}, X, \Phi )$ is PT and $\mathcal{M}$ is dense. Then $( \mathcal{S}, X, \Phi )$  is TT.

\end{lemma}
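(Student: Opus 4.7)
Let $U, V$ be opene. The plan is to apply Lemma \ref{punto_minimal} at a minimal point in $V$ to obtain a useful open set from which orbits land in $V$; then use the transitive point $x$ to find a common neighbourhood of $x$ whose images under two suitable compositions lie in $U$ and $V$; and finally invoke the density of $\mathcal{M}$ inside that neighbourhood to replace $x$ by a minimal point, at which point minimality will supply the single element of $\mathcal{S}$ that TT requires.

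Concretely, I would first choose $y \in \mathcal{M} \cap V$ and a neighbourhood $V_y \subset V$ of $y$, and apply Lemma \ref{punto_minimal} to obtain $s_1, \ldots, s_{n_0} \in \mathcal{S}$ such that the open set $W := \bigcup_{i=1}^{n_0} \Phi_{s_i}^{-1}(V_y)$ contains $\overline{O(y)}$ and every point of $W$ is sent into $V$ by some $\Phi_{s_i}$. Since $\overline{O(x)} = X$ and both $U$ and $W$ are opene, there exist $a, b \in \mathcal{S}$ with $\Phi_a(x) \in U$ and $\Phi_b(x) \in V$ (the latter obtained as $b = s_i t$ after hitting $W$ with $\Phi_t(x)$). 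By continuity at $x$, I pick a single neighbourhood $N$ of $x$ satisfying $\Phi_a(N) \subset U$ and $\Phi_b(N) \subset V$, and by density of $\mathcal{M}$ I select a minimal point $p \in \mathcal{M} \cap N$.

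Then $\Phi_a(p) \in U$ and $\Phi_b(p) \in V$, and both points lie in $\overline{O(p)}$. Because $p$ is a minimal point, the orbit closure $\overline{O(\Phi_a(p))}$ coincides with $\overline{O(p)}$ and therefore already contains $\Phi_b(p) \in V$; as $V$ is open this forces $O(\Phi_a(p)) \cap V \neq \emptyset$, yielding $s \in \mathcal{S}$ with $\Phi_s(\Phi_a(p)) \in V$. Combined with $\Phi_a(p) \in U$ this gives $\Phi_s(U) \cap V \neq \emptyset$, proving TT.

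The delicate step, and the reason one cannot proceed directly from $\Phi_a(x) \in U$ and $\Phi_b(x) \in V$, is that a general semigroup admits no cancellation: there is no way to relate $a$ and $b$ via a single element of $\mathcal{S}$. Replacing the transitive point $x$ by a \emph{minimal} point $p$ in the same small neighbourhood is exactly the device that bypasses this obstruction, since every point of a minimal set has orbit closure equal to the whole set. This is precisely where the hypothesis that $\mathcal{M}$ is dense enters essentially.
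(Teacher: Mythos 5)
Your argument is correct and follows essentially the same route as the paper's proof: pick a transitive point, find $a,b$ sending it into $U$ and $V$, use continuity to get one neighbourhood working for both, replace the transitive point by a minimal point from the dense set $\mathcal{M}$, and then use minimality (orbit closure of $\Phi_a(p)$ equals that of $p$) to land back in $V$. The only difference is cosmetic: your detour through Lemma \ref{punto_minimal} to produce $b$ is unnecessary, since transitivity of $x$ already gives $\Phi_b(x)\in V$ directly for the opene set $V$.
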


\begin{proof}

Let $U$ and $V $ be opene sets and $z\in X$ such that $\overline{O(z)}=X$. Then there exist $\Phi_s, \Phi_{s_{1}} $ and $W_z$ a neighbourhood of $z$ such that $\Phi_s (W_z)\subset U$ and $\Phi_{s_{1}} (W_z)\subset V$.
   As $\mathcal{M}$ is dense there exists $y\in \mathcal{M}\cap W_z$. Then $\Phi_s (y)\in U$ and $\Phi_{s_{1}} (y)\in V$. As $y$ is a minimal point then there exists $\Phi_{s_{2}} $ such that
   $\Phi_{s_{2}}\Phi_s (y)$ is close enough to $\Phi_{s_{1}} (y)$ so that $\Phi_{s_{2}}\Phi_s (y)\in V$. Then $\Phi_{s_{2}}(U)\cap V\neq \emptyset$.
\end{proof}

%
%Since that $X$ is a Polish space, then there exists a countable open base $\mathcal{B}$.
% By the Baire theorem, $\mathcal{G}=\cap\{\psi ^{-1}(U) :\ \  U \in  \mathcal{B}\}$ is dense in $X$ and
%every point of this set is a point transitive of the dynamical system  $(X,\mathcal{S})$.
%
%
%*****Poner en particular TT mas $X$ Polish******

\subsection{Proof of Theorem \ref{teo22}}

 %We denote by $\mathcal{M}^{-1}=\cup_{\psi\in \mathcal{S} } \psi^{-1} (\mathcal{M} )$.

Note that $\mathcal{M}$ is a disjoint union of minimal sets.
 The proof will be divided into two cases.\\
{\bf{ Case (a):}} $\mathcal{M}$ is a minimal set $M$. Let $x_0\in X\setminus M$ and $4\delta =d(x_0,M)$. We will show that the dynamical  system $(\mathcal{S},X,\Phi )$ has sensitive dependence on initial conditions with sensitivity constant $\delta $.

 Let $x\in X $ and $U$ be an open set with $x\in U$. As $\mathcal{M}^{-1}$ is dense there exists $y\in U$ and $\Phi_s$ such that $\Phi_s (y)\in M$.
Let $W_y$ be a neighborhood of $y$, $W_y\subset U$,  such that $d(x_0, \Phi_s (W_y))\geq 3\delta$.

 Since the dynamical  system $(\mathcal{S},X,\Phi )$ is almost open, there exists a open set $V\subset \Phi_s (W_y)$. As the system is TT there exist $w\in V$ and $\Phi_{s_{1}}$ such that $\Phi_{s_{1}} (w)\in B(x_0,\delta )$.
  Note that $w=\Phi_s (z)$ for some $z\in W_y\subset U$ and therefore $\Phi_{s_{1}} (w)=\Phi_{s_{1}} \Phi_s (z)\in B(x_0,\delta )$. As $\Phi_{s_{1}}\Phi_{s}  (y)\in M$, then we have that $y,z\in U$ and \\$d(\Phi_{s_{1}} \Phi_s (y), \Phi_{s_{1}} \Phi_{s}  (z))\geq\delta$.\\
  {\bf {Case (b):}} There exist $M_1,M_2$ minimal sets included in $\mathcal{M}$ with $M_1\neq M_2$. Let $8\delta =d(M_1,M_2)$, $x\in X $ and $U$ be an open set with $x\in U$.
  As $\mathcal{M}^{-1}$ is dense there exist $y\in U$, $\Phi_s$ and a minimal set $M$  such that $\Phi_s (y)\in M$. Note that $d(\Phi_s (y),M_1)\geq 4\delta $ or $d(\Phi_s (y) ,M_2)\geq 4\delta $. Suppose that $d(\Phi_s (y),M_1)\geq 4\delta $.

  Given $B(\Phi_s (y),\delta )$, by Lemma \ref{punto_minimal} there exist
$\Phi_{s_{1}},..., \Phi_{s_{n_{0} }}$ such that for all  $z\in\overline{O(\Phi_s(y))}$  there exists $i\in \{1,...,n_{0}\}$  with $\Phi_{s_{i}}(z)\in B(\Phi_s (y),\delta)$.
Let $W$ a neighbourhood of $M_1$ such that $$   d( \Phi_{s_{i}} (W),B(\Phi_s (y), \delta )) \geq 2\delta \mbox{  for any  } i\in \{1,...,n_{0}\}  .$$

Let $W_y$ be a neighborhood of $y$, $W_y\subset U$,  such that $ \Phi_s (W_y))\subset B(\Phi_s (y),\delta)$.

 Since the dynamical system $(\mathcal{S},X,\Phi)$ is almost open, there exists an opene set $V\subset \Phi_s (W_y)$. As the dynamical system is TT, there exists $w\in V$ and $\Phi_r$ such that $\Phi_r (w)\in W$. Note that $w=\Phi_s (z)$ for some $z\in W_y\subset U$ and $\Phi_r (w)=\Phi_r\Phi_s (z)\in W$.  Let $i\in \{1,...,n_{0}\}$ be such that $\Phi_{s_{i}}\Phi_r \Phi_s (y)\in B(\Phi_s (y),\delta )$.
 As $\Phi_{s_{i}}\Phi_r \Phi_s  (z) \in \Phi_{s_{i}}(W)$ and $  d( \Phi_{s_{i}} (W),B(\Phi_s (y), \delta )) \geq 2\delta$, then $d (   \Phi_{s_{i}}\Phi_r\Phi_s  (z)      , \Phi_{s_{i}}\Phi_r\Phi_s  (y)        )\geq \delta $ and we are done.

\section{Example .}\label{ejemplos}

\begin{figure}[h]
\psfrag{0}{$0$}
\psfrag{1}{$1$}
\psfrag{13}{$\frac{1}{3}$}
\psfrag{23}{$\frac{2}{3}$}
\psfrag{f}{$f_1\equiv 0$}
\psfrag{g}{$f_2$}
\psfrag{12}{$\frac{1}{2}$}
\psfrag{14}{$\frac{1}{4}$}
\psfrag{18}{$\frac{1}{8}$}
\psfrag{h}{$f_3$}
\begin{center}
\caption{\label{figura101}}
\includegraphics[scale=0.22]{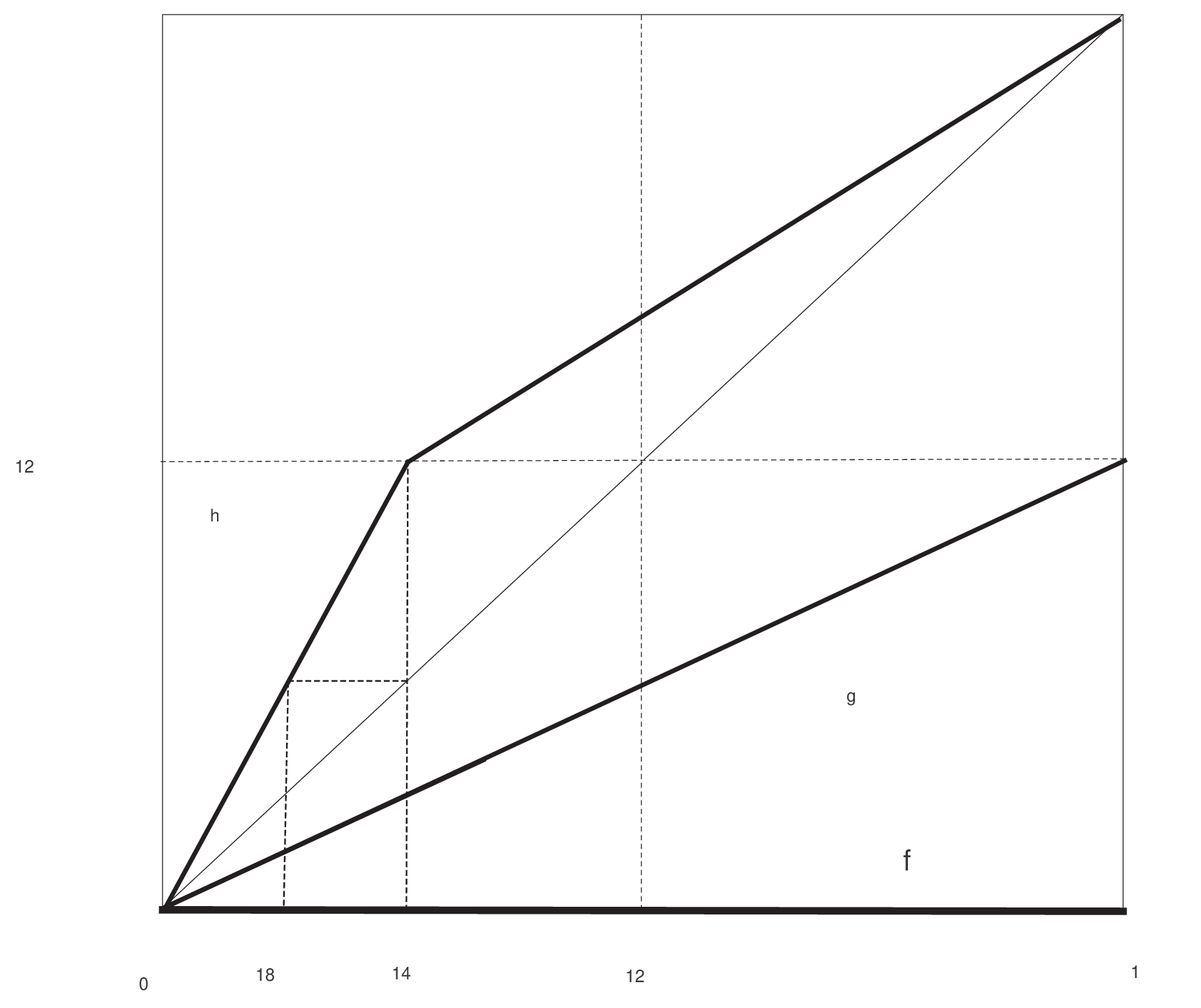}
%\subfigure[]{\includegraphics[scale=0.16]{figura10.eps}}
\end{center}
{ }
\end{figure}

Let $f_1,f_2,f_3:[0,1]\to [0,1]$ be as in Figure \ref{figura101} and  $\mathcal{S}$ the free semigroup generated by three elements $a,b $ and $c$. We consider the action $\Phi$ generated by $\Phi_a=f_1$, $\Phi_b=f_2$ and $\Phi_c=f_3$.
Given $s\in \mathcal{S}$, let $A_{s}=\{x\in (0,1): \ \mbox{ there exists } \Phi_s ^{'}(x) \}$. Note that $A^{c}_{s}$ is a finite set.

 As $f_3|_{[0,\frac{1}{8}]}=f_2^{-1}$ then $f_3\circ f_2=f_2\circ f_3=Id$ in $[0,\frac{1}{8}]$. Since $f^{'}_i|_{(\frac{1}{4},1)}<1$, for all $i=1,2,3.$
 It is not hard to prove that, for any $s\in \mathcal{S}$ and  $x\in A_{s}\cap (\frac{1}{2},1)$,  $\Phi_s ^{'}(x)<1$ (see \cite{ip} ) .
Let $I\subset (\frac{1}{2},1)$  an interval and $s\in \mathcal{S}$. As $A^{c}_{s}$ is a finite set then $$| \Phi_s (I)|=\int_{I\cap A_{s}}\Phi_s ^{'}(x)dx.$$
So, we conclude that $| \Phi_s (I)|\leq |I|$. Therefore the dynamical system $(\mathcal{S},X,\Phi)$ is not sensitive.\\
Now we will prove that for all $x\in (0,1]$, $\overline{O (x)}= [0,1]$.\\
 Let $x\in (0,1]$ and suppose that $\overline{O (x)}\neq [0,1]$. Let $I$ be a connected component of $[0,1]\setminus \overline{O (x)}$ with maximum length.
It is easy to prove that $f_3^{-1}(I)$ or $f_2^{-1}(I)$ or $(f_i f_j)^{-1}(I)$ with $i,j\in \{2,3\}$ has greater length than $I$,  which is a contradiction. Therefore $\overline{O (x)}= [0,1]$.\\
As $\mathcal{M}=\{0\}$ and  $f_1^{-1}(0)=[0,1]=X$, therefore $\mathcal{M}^{-1}$ is dense.
So $(\mathcal{S},X,\Phi)$ is TT and non-minimal, the set $\mathcal{M}^{-1}$ is dense, and $(\mathcal{S},X,\Phi)$ is non-sensitive.


\begin{thebibliography}{99}

\bibitem[AAK]{aak}
\newblock {E. Akin, J. Auslander and K. Berg,}
\newblock When is a transitive map chaotic?,
\newblock \emph{in: Conference in Ergodyc
Theory and Probability (V. Bergelson, K. March and J. Rosenblatt, eds.), de Gruyter, Berlin,}  (1996).



\bibitem[BBCDS]{3}
\newblock Banks, J.; Brooks, J.; Cairns, G.; Davis, G.; Stacey, P.,
\newblock On Devaney's definition of chaos,
\newblock \emph{ Amer. Math. Monthly,} \textbf{99} (1992) , no. 4, 332–-334.


\bibitem[D]{d}
\newblock Dai, Xiongping. ,
\newblock Chaotic dynamics of continuous-time topological semi-flows on Polish spaces,
\newblock \emph{ J. Differential Equations,} \textbf{258} (2015) , no. 8, 2794–-2805.

\bibitem[GRSS]{grss}
\newblock F. H. Ghane, E. Rezaale, M. Saleh and A. Sarizadeh,
\newblock Sensitivity of iterated function systems,
\newblock J. Math. Anal. Appl. 469 (2019), no. 2, 493–-503.


\bibitem[G]{g}
\newblock E. Glasner,
\newblock Ergodic Theory via Joinings. Math. Surveys and Monographs,
\newblock  \emph{vol. 101. Am. Math. Soc., Providence (2003).}
%
%\bibitem[GIP]{gip}
%\newblock N. Guelman, J. Iglesias, A. Portela.
%\newblock Examples of minimal set for IFSs,
%\newblock \emph{Discrete Contin. Dyn. Syst. 37 (2017), no. 10, 5253–-5269.}


\bibitem[IP]{ip}
\newblock Iglesias, J., Portela, A,
\newblock  Almost open semigroup actions,
\newblock \emph{ Semigroup Forum 98 (2019), no. 2, 261–-270}.

\bibitem[KM]{km}
\newblock Kontorovich, E., Megrelishvili, M,
\newblock  A note on sensitivity of semigroup actions,
\newblock \emph{ Semigroup Forum,} \textbf{76} (2008), 133--141.







\end{thebibliography}
\end{document}